\newif\ifpub
\newtheorem{theorem}{Theorem}[section]
\newtheorem{lemma}[theorem]{Lemma}
\newtheorem{definition}{Definition}[section]
\newtheorem{corollary}[theorem]{Corollary}
\newtheorem{proposition}{Proposition}[section]
\newtheorem{conjecture}{Conjecture}[section]
\newtheorem{theorem}{Theorem}[section]
\newtheorem{proposition}[theorem]{Proposition}
\newtheorem{conjecture}[theorem]{Conjecture}
\newtheorem{definition}[theorem]{Definition}
\newcommand{\indeq}{\Vectorstack{\alpha{} \omega}}
\newcommand{\planar}{\mathcal{P}}
\tikzset{inner sep=0mm, minimum size=2mm,every node/.style=draw,circle}
\tikzset{invisible/.style={minimum size=0mm,inner sep=0mm,outer sep=0mm}}
\g@addto@macro\@floatboxreset{\centering}
\begin{document}
\title{\bf Packing independent cliques into planar graphs}
\author{
Csaba Bir\'o\footnote{Department of Mathematics, University of Louisville, Louisville, KY 40292, USA; and Centro de Investigaci\'on en Matemática Pura y Aplicada, Universidad de Costa Rica, San Jos\'e, Costa Rica {\tt csaba.biro@louisville.edu}},
Gabriel Collado\footnote{Universidad de Costa Rica, San Jos\'e, Costa Rica; current address: Department of Mathematics, Statistics, and Computer Science, University of Illinois Chicago, Chicago, IL 60607, USA {\tt gcoll8@uic.edu}},
Oscar Zamora\footnote{Centro de Investigaci\'on en Matem\'atica Pura y Aplicada, Escuela de Matem\'atica, Universidad de Costa Rica, San Jos\'e, Costa Rica {\tt oscar.zamoraluna@ucr.ac.cr}}
}

\else

\title{Packing independent cliques into planar graphs}

\author{Csaba Bir\'o}
\address{Department of Mathematics, University of Louisville, Louisville, KY 40292, USA}
\address{Centro de Investigaci\'on en Matemática Pura y Aplicada, Universidad de Costa Rica, San Jos\'e, Costa Rica}
\author{Gabriel Collado}
\address{Universidad de Costa Rica, San Jos\'e, Costa Rica}
\curraddr{Department of Mathematics, Statistics, and Computer Science, University of Illinois Chicago, Chicago, IL 60607, USA}
\author{Oscar Zamora}
\address{Centro de Investigaci\'on en Matem\'atica Pura y Aplicada, Escuela de Matem\'atica, Universidad de Costa Rica, San Jos\'e, Costa Rica}

\begin{document}
\fi

\begin{abstract}
The indeque number of a graph is the size of a largest set of vertices that induces an independent set of cliques. We study the extremal value of this parameter for the class and subclasses of planar graphs, most notably for forests and graphs of pathwidth at most $2$.
\end{abstract}

\ifpub\thispagestyle{empty}\else\maketitle\fi

\section{Introduction}

In this article we study the following graph parameter.

\begin{definition}
Let $G$ be a graph. A set $S\subseteq V(G)$ is an \emph{indeque set}, if $G[S]$ is disjoint union of independent cliques. The \emph{indeque number} of $G$ is
\[
\indeq(G)=\max\{|S|:S\text{ is an indeque set of $G$}\}.
\]
\end{definition}

The name is motivated by the trivial inequalities
\[
\max\{\alpha(G),\omega(G)\}\leq \indeq(G)\leq\alpha(G)\omega(G),
\]
which express that the indeque number is no less than the independent number and the clique number, but no greater than their product. The gap in both inequalities can be arbitrarily large: indeed, for $n\in\mathbb{N}$, let $G_n$ be graph on $3n$ vertices with $n$ independent edges. Then $\alpha(G_n)=2n$, $\omega(G_n)=2$, and $\indeq(G_n)=3n$.

This graph parameter in this form was first introduced by Ertem et al.~\cite{Ertem} (though not under this name), but it has been studied in disguise as the cluster vertex deletion problem. In that setting, one is looking for the minimum number of vertices to delete from a graph such that the resulting graph is a collection of independent cliques.

This problem is algorithmically difficult, even in some very limited settings. To find out if there is an indeque set of size at least $k$ (where $k$ is part of the input) is NP-complete \cite{NPC}, even when restricted to bipartite graphs \cite{bip}, or even bipartite graphs of maximum degree $3$ \cite{bip3}. More relevant to us that the problem remains NP-complete for planar graphs \cite{planar}. Very recently, Le and Le \cite{LeLe} characterized the graphs for which the cluster vertex deletion problem is polynomially solvable in terms of forbidden induced subgraphs.

\begin{definition}
Let $\mathcal{G}$ be a class of graphs. Define
\[
\indeq(\mathcal{G},n)=\min\{\indeq(G): G\in\mathcal{G}\text{ and }|G|=n\}
\]
and
\[
\indeq(\mathcal{G})=\lim_{n\to\infty}\frac{\indeq(\mathcal{G},n)}{n}
\]
if the limit exists. The number $\indeq(\mathcal{G})$ is called the \emph{indeque ratio} of the class $\mathcal{G}$.
\end{definition}

If the graph class $\mathcal{G}$ is closed under disjoint union, then $\indeq(\mathcal{G},n+m)\leq\indeq(\mathcal{G},n)+\indeq(\mathcal{G},m)$, so by Fekete's Lemma,
$\lim\indeq(\mathcal{G},n)/n$ exists, and equals to $\inf\{\indeq(\mathcal{G},n)/n\}$. In this paper, all our studied classes are closed under disjoint union, so the indeque ratio in all cases is well-defined.

Salia et al.~\cite{salia} studied the indeque ratio of the family of comparability graphs of posets that have acyclic cover graph. Motivated by the open problem posed at the end of the paper, we studied the indeque ratio of planar graphs, and subclasses of planar graphs.

Let $\planar$ be the class of planar graphs. By the Four Color Theorem, $\indeq(\planar,n)\geq n/4$, so $\indeq(\planar)\geq 1/4$. We will show that this lower bound is not sharp, though the exact value of $\indeq(\planar)$ remains open. Also note that any lower bound for $\indeq(\planar)$ serves as a lower bound for any subclass of $\planar$. However, for some subclasses we were able to find the indeque ratio exactly.

In this article we will freely use the definitions and notations of Diestel's \cite{D} excellent graph theory textbook. In particular the following definitions will be essential in Section~\ref{sec:pathwidth}.

\begin{definition}
Let $G$ be a graph. A \emph{tree-decomposition} of $G$ is a tree $T$ whose vertices are subsets of $V(G)$ satisfying the following properties.
\begin{itemize}
\item For every vertex $v\in V(G)$ there is a $B\in V(T)$ such that $v\in B$.
\item For every edge $\{u,v\}\in E(G)$ there is a $B\in V(T)$ such that $u,v\in B$.
\item For every vertex $v\in V(G)$, the collection $\{B\in V(T):v\in B\}$ induces a (connected) subtree of $T$.
\end{itemize}
The \emph{width} of a tree-decomposition is 
\[
\max_{B\in V(T)}\{|B|-1\}.
\]
\end{definition}
If $T$ is a path, the tree-decomposition is called a \emph{path-decomposition}.
\begin{definition}
The \emph{treewidth} of a graph $G$ is the minimum width of a tree-decomposition of $G$. The \emph{pathwidth} of a graph $G$ is the minimum width of a path-decomposition of $G$.
\end{definition}

\section{Forests}

In this section, we study the indeque ratio of the class of forests, which we will denote by $\mathcal{F}$.

\begin{theorem}\label{thm:forest}
\[\indeq(\mathcal{F})=2/3\]
\end{theorem}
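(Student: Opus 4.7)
The strategy is to prove the two matching bounds separately.

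For the upper bound $\indeq(\mathcal{F}) \leq 2/3$, I consider $F_n$, the disjoint union of $n/3$ copies of $P_3$ (with $n$ a multiple of $3$). In $P_3$, the only set of three vertices is the whole vertex set, which induces $P_3$ itself, neither independent nor a clique; so $\indeq(P_3) = 2$. Hence $\indeq(F_n) = 2n/3$, giving $\indeq(\mathcal{F}, n)/n \leq 2/3$ along this subsequence.

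For the lower bound $\indeq(\mathcal{F}) \geq 2/3$, I prove by strong induction on $n$ that every forest $F$ on $n$ vertices has $\indeq(F) \geq 2n/3$. The cases $n \leq 3$ are immediate. For $n \geq 4$, since $\indeq$ is additive over components, I reduce to the case of a tree $T$. Root $T$ and select a non-leaf vertex $v$ of maximum depth, so that every child of $v$ is a leaf. I split on the number $k$ of leaf children of $v$. If $k \geq 2$, let $\ell_1, \ldots, \ell_k$ be those leaves; the set $\{\ell_1, \ldots, \ell_k\}$ is independent and avoids $v$, so combining it with an indeque set of $T - \{v, \ell_1, \ldots, \ell_k\}$ (of size at least $2(n-k-1)/3$ by induction) yields an indeque set of size at least $k + 2(n-k-1)/3 = 2n/3 + (k-2)/3 \geq 2n/3$. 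If $k = 1$, with $\ell$ the unique leaf child, then (using $n \geq 4$ to rule out $T$ being a star) $v$ must have a parent $p$ and $\deg(v) = 2$; take $\{v, \ell\}$ as a $K_2$ component and recurse on $T - \{v, \ell, p\}$ (on $n-3$ vertices) to obtain a total of at least $2 + 2(n-3)/3 = 2n/3$.

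The main subtlety is the tight second case, which exactly matches the extremal $P_3$ construction: it is essential to exclude $p$ from the recursion (not just $v$ and $\ell$), because this guarantees the recursive indeque set lies inside $V(T) \setminus N[v]$, so that $v$'s only neighbor in the combined set is $\ell$. In both cases one must verify that no edge of $T$ links $S'$ to the recursive portion; this is automatic since the leaves $\ell_i$ (resp.\ $\ell$) have no neighbors other than $v$, and $v$ is deleted from the recursion (along with $p$ in the second case). This finishes the induction.
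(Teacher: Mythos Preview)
Your proof is correct and follows essentially the same approach as the paper's. Both arguments induct by locating a ``penultimate'' vertex (your deepest non-leaf $v$; the paper's neighbor $u$ of an endpoint of a longest path), then split on whether that vertex has degree $2$ (remove it, its leaf, and its other neighbor; keep the $K_2$) or higher degree (remove it together with leaf neighbors; keep the independent leaves). The only cosmetic differences are that the paper uses a longest path rather than a rooting, always removes exactly three vertices even when $\deg(u)>2$, and witnesses the upper bound with a single path $P_n$ rather than disjoint copies of $P_3$.
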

\begin{proof}
It is easy to see that for the path $P_{n-1}$ (that is, a path on $n-1$ edges and $n$ vertices), if $3|n$, then $\indeq(P_{n-1})=2n/3$. This shows $\indeq(\mathcal{F})\leq 2/3$.

For the lower bound, we will proceed by induction. For small forests, the statement of the theorem is easily verified. Assume $F$ is a forest, and let $T$ be a largest component. Since indeque sets can be constructed componentwise, we may assume $|T|\geq 4$.

Let $P$ be a longest path in $T$, and let $v$ be an endpoint of $P$. Note that $v$ is a leaf of $T$, and so it has a unique neighbor $u$. We distinguish two cases.

If $\deg(u)=2$, then let $w$ be the unique vertex such that $w\neq v$ and $w\sim u$. Let $T'=T-\{u,v,w\}$. By the hypothesis, $T'$ has an indeque set $S$ such that $|S|\geq 2|T'|/3$. Since $S\cup\{u,v\}$ is an indeque set in $T$, we get that $\indeq(T)\geq |S|+2\geq 2|T'|/3+2=2|T|/3$.

If $\deg(u)>2$, then let $w$ be a neighbor of $u$ that is not on $P$. Note that the maximality of the length of $P$ implies that $w$ is a leaf. Let $T'=T-\{u,v,w\}$ and $S$ be an indeque set of $T'$. Since $S\cup\{v,w\}$ is an indeque set of $T$, the rest of the proof is the same as in the previous case.
\end{proof}

\section{Graphs of small pathwidth}\label{sec:pathwidth}

Let $\mathcal{P}_k$ be the class of graphs of pathwidth at most $k$. Since $\mathcal{P}_1$ contains only forests, but the paths are contained in it, our previous results imply that $\indeq(\mathcal{P}_1)=2/3$. The class $\mathcal{P}_3$ contains non-planar graphs (e.g.\ $K_{3,3}$), so our interest is to determine $\indeq(\mathcal{P}_2)$.

We prove the following theorem.

\begin{theorem}\label{thm:pw}
\[
\indeq(\mathcal{P}_2)=1/2
\]
\end{theorem}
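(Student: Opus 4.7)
For the upper bound, my plan is to take $n/4$ disjoint copies of $C_4$, for $n$ divisible by~$4$. Each $C_4$ has pathwidth exactly~$2$, as witnessed by the path decomposition with bags $\{v_1,v_2,v_4\}$ and $\{v_2,v_3,v_4\}$, and indeque number~$2$, since any three of its vertices span an induced $P_3$. Because $\mathcal{P}_2$ is closed under disjoint unions, this construction yields $\indeq(\mathcal{P}_2,4k)\leq 2k$ and hence $\indeq(\mathcal{P}_2)\leq 1/2$.

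For the lower bound, I plan to prove by induction on $|V(G)|$ that every $G\in\mathcal{P}_2$ satisfies $\indeq(G)\geq |V(G)|/2$. The base cases $|V(G)|\leq 3$ are immediate, and pathwidth is closed under vertex deletion, so the inductive hypothesis applies to any vertex-deleted subgraph. The key structural ingredient is that $\mathrm{tw}(G)\leq\mathrm{pw}(G)\leq 2$, and hence $G$ has a vertex~$v$ of degree at most~$2$.

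The inductive step proceeds by case analysis on the local structure at~$v$. When $\deg(v)=0$, we simply add~$v$ to the inductive indeque set of $G-v$. When $\deg(v)=1$ with neighbor~$u$, we apply induction to $G-\{u,v\}$ and put $v$ back, which is safe because $v$ has become isolated. When $\deg(v)=2$ with neighbors $u,w$ and $uw\notin E(G)$ (so $uvw$ is an induced $P_3$), we remove $\{u,v\}$ and reason from the inductive indeque set; the subtlety is treating the case where $w$ lies in that set as part of a clique of size at least~$2$, which calls for a local swap of $w$'s clique-mate for~$v$.

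The main obstacle is the triangle subcase, $\deg(v)=2$ with $uw\in E(G)$. Naively deleting $\{u,v,w\}$ loses three vertices but only permits safely adding back $\{v\}$, yielding $(n-1)/2$ rather than~$n/2$. To close this gap, I will fix a nice path decomposition of width~$2$ and choose $v$ to be the last-introduced vertex, so that $N(v)$ lies inside the final bag and the external neighborhoods of $u$ and~$w$ are tightly controlled by the decomposition. A refined case analysis on those external neighborhoods, together with an occasional local modification of the inductive indeque set (for instance, swapping a clique-mate of $u$ or~$w$ for~$v$), should suffice to add back a $K_2$ worth of vertices and thereby preserve the $n/2$ bound. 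The extremality of $C_4$ is consistent with this strategy: equality corresponds to the situation in which the recovered pair is forced to be an independent $2$-set.
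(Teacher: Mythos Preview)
Your upper bound via disjoint copies of $C_4$ is correct and matches the paper. The lower bound, however, has a genuine gap already in the non-triangle $\deg(v)=2$ case. Swapping a clique-mate of $w$ for $v$ returns a set of size $|S|$, not $|S|+1$, so from $|S|\ge(n-2)/2$ you only recover $\indeq(G)\ge(n-2)/2$, not $n/2$. Worse, when $w$ sits in a $K_3$ inside $S$ the swap does not even produce an indeque set: take $G$ on $\{w,a,b,u,v\}$ with $wab$ a triangle, $u\sim a$, $v\sim u$, $v\sim w$ (pathwidth~$2$, via bags $\{b,w,a\},\{w,a,u\},\{w,u,v\}$). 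Deleting $\{u,v\}$ leaves $K_3$, whose unique maximum indeque set is $S=\{w,a,b\}$; swapping either $a$ or $b$ for $v$ yields an induced $P_3$. The triangle case is in worse shape still: you offer only the hope that a refined analysis ``should suffice''. Choosing $v$ last in a nice path decomposition pins $N(v)$ into one bag, but it does not bound the external neighbourhoods of $u$ and $w$, which may thread through arbitrarily many earlier bags.

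The paper takes a different route precisely to obtain this missing control. After reducing to $\delta(G)\ge 2$, it picks a leaf $B$ of the block graph (necessarily a $2$-connected block) and invokes the Bir\'o--Keller--Young structural theorem: $B$ is a longest cycle $C$ together with non-crossing $C$-paths of length at most~$2$, and any vertex of $B$ with a neighbour outside $B$ lies on $C$. It then identifies an ``end-cap'' of $B$, deletes four carefully chosen vertices from it, and adds two back to the inductive indeque set. That external structural theorem supplies exactly the global information a purely local minimum-degree argument cannot.
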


To prove our results, we will use two structural theorems. The first one is a well-known theorem about the connectivity structure of general graphs.

Recall that a \emph{block} is a maximal connected subgraph without a cutvertex, i.e.~a maximal $2$-connected subgraph or a bridge (with its ends) or an isolated vertex. Every graph can be decomposed into its blocks, of which any two share at most one vertex, and each shared vertex is a cutvertex of $G$. Let $A$ be the set of cutvertices of $G$, and $\mathcal{B}$ be the set of its blocks. The \emph{block graph} is a bipartite graph on $A\cup\mathcal{B}$, whose edge set is $\{\{a,B\}:a\in A, B\in\mathcal{B}, a\in B\}$. The following theorem is proven in \cite{D}. (See Lemma~3.1.4.)

\begin{theorem}
The block graph of any graph is a forest.
\end{theorem}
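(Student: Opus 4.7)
I would argue by contradiction: assume the block graph contains a cycle. Since it is bipartite on $A\cup\mathcal{B}$, any such cycle has the form $a_1 B_1 a_2 B_2\cdots a_k B_k a_1$ with $k\ge 2$ and $a_i,a_{i+1}\in B_i$ (indices mod $k$), and I would in addition take this cycle to be of minimum length in the block graph. The case $k=2$ is immediate, since $a_1,a_2\in B_1\cap B_2$ would violate the fact that two distinct blocks share at most one vertex.

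Assume then $k\ge 3$. In each connected block $B_i$, choose a path $P_i$ from $a_i$ to $a_{i+1}$. Minimality of the block-graph cycle ensures that no cutvertex of $G$ other than the $a_j$ themselves is shared by two of the $B_i$ (otherwise the two blocks meeting at such a cutvertex would yield a shorter cycle in the block graph). Consequently the $P_i$ can be taken pairwise internally disjoint and meeting only at the $a_j$, and their concatenation is a cycle $C$ of $G$ through $a_1,\ldots,a_k$. A useful byproduct is that no $B_i$ is a single-edge bridge block: the edge of such a block is a bridge of $G$ and so lies on no cycle, yet $C$ uses it. Each $B_i$ is therefore genuinely $2$-connected.

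The endgame is to show that the union $H=B_1\cup\cdots\cup B_k$ is a connected subgraph of $G$ without a cutvertex. Since $H$ strictly contains $B_1$, this contradicts the maximality of $B_1$ as a block. Connectedness is clear from $C$. For the absence of a cutvertex I would verify that any two vertices $x,y\in V(H)$ lie on a common cycle of $H$: use the $2$-connectivity of the block containing $x$ (resp.\ $y$) to route $x$ (resp.\ $y$) out to $C$ along two internally disjoint paths within that block, and then close them up using the two arcs of $C$ between the resulting contact points. I expect the main obstacle to be the bookkeeping in the degenerate subcases — $x$ and $y$ in the same block, either of them already equal to some $a_i$, or the two contact points on $C$ coinciding — but each of these reduces directly to a local application of the $2$-connectivity of an individual $B_i$, so no genuinely new idea is needed.
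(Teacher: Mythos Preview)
The paper does not give its own proof of this statement; it is quoted from Diestel's textbook (Lemma~3.1.4 there), so there is no in-paper argument to compare against directly.

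Your plan is sound and would go through. The minimality argument correctly forces the $B_i$ to meet only at the prescribed cutvertices $a_j$, so the concatenated paths $P_i$ do form a genuine cycle $C$ in $G$, and your observation that none of the $B_i$ can then be a bridge block is valid.

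That said, the final stage is heavier than needed. Once the cycle $C$ exists in $G$ you are already done: any cycle is a connected subgraph with no cutvertex and hence lies inside a single block of $G$, yet $C$ carries at least one edge from each of the $k\ge 2$ distinct blocks $B_1,\dots,B_k$ (each $P_i$ has length at least $1$, and every edge of $G$ belongs to exactly one block). This is an immediate contradiction and dispenses entirely with the verification that $H=B_1\cup\dots\cup B_k$ is $2$-connected, as well as the degenerate-subcase bookkeeping you anticipate.
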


The second theorem that we will need is about the structure of maximal 2-connected subgraphs of graphs of pathwidth at most $2$. This was proven by Bir\'o, Keller, and Young \cite{BKY}. Previously, Bar\'at et al.~\cite{Betal} proved a similar, but somewhat less general theorem.

\begin{theorem}\label{thm:pwstruct}
Let $G$ be a graph of pathwidth at most $2$, and let $B$ be a 2-connected block of $G$. Then $B$ has the following structure.
\begin{itemize}
\item $B$ has a longest cycle $C$ on vertices (in order) $v_1,v_2,\ldots,v_k,w_\ell,w_{\ell-1},\ldots,w_1$ with $k,\ell\geq 1$, and $k+\ell\geq 3$.
\item $B$ consists of $C$ and some $C$-paths (non-trivial paths that meet $C$ exactly in its ends) of length at most $2$ (i.e.~having one or two edges).
\item All $C$-paths are from some $v_i$ to some $w_j$, such that for two $C$-paths $v_{i_1}P_1w_{j_1}$ and $v_{i_2}P_2 w_{j_2}$ for which $i_1<i_2$, we have $j_1\leq j_2$.
\item If there is a vertex $b\in V(B)$ that is adjacent to a vertex $v\not\in V(B)$, then $b\in V(C)$.
\end{itemize}
\end{theorem}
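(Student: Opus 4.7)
The plan is to read off the cycle $C$ together with its two ``sides'' directly from a carefully chosen path decomposition of $B$, and then verify the remaining claims from the decomposition.

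First, fix a path decomposition $(X_1,\ldots,X_n)$ of $B$ of width at most $2$ with $n$ minimum. Since $B$ is $2$-connected, the set $X_i\cap X_{i+1}$ is a separator of $B$ whenever the two sides are non-empty, so $|X_i\cap X_{i+1}|\ge 2$. Together with the minimality of $n$, this forces $|X_i|=3$ for every $i$ and $|X_i\cap X_{i+1}|=2$; each step therefore forgets exactly one vertex and introduces exactly one. Write $S_i:=X_i\cap X_{i+1}$ for the resulting sequence of $2$-element separators.

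Next, I would label the two vertices of each $S_i$ as ``top'' and ``bottom,'' propagating along the decomposition: pick any labeling of $S_1$; moving from $S_i$ to $S_{i+1}$, the vertex shared by the two keeps its label, and the vertex of $S_{i+1}$ missing from $S_i$ inherits the label of the vertex of $S_i$ missing from $S_{i+1}$. Collapsing consecutive repeats in the sequences of top- and bottom-labels gives two vertex sequences $v_1,\ldots,v_k$ and $w_1,\ldots,w_\ell$; each consecutive pair inside a sequence shares a bag, and minimality (any non-edge inside a bag would let us shorten the decomposition) forces an edge between them. The ``closing'' edges $v_1w_1$ and $v_kw_\ell$ sit in $X_1$ and $X_n$ respectively (after a cosmetic adjustment at the two ends), producing the cycle $v_1v_2\cdots v_kw_\ell w_{\ell-1}\cdots w_1v_1$. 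Checking that this cycle is longest reduces to noting that a longer cycle would have to visit four vertices inside some bag of size $3$.

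Once $C$ is in hand, every $x\in V(B)\setminus V(C)$ occupies the ``third slot'' of some bag $X_i$ whose other two vertices are $S_{i-1}\cup S_i\subseteq V(C)$; hence $x$ has degree at most $2$ in $B$, is adjacent only to those two cycle vertices (one $v$ and one $w$ by the labeling), and sits on a $C$-path of length $2$ between them. Chords of $C$ (length-$1$ $C$-paths) are likewise forced to join a $v$ to a $w$ by the same bag-slot analysis. The non-crossing property is immediate from the \emph{linear} order of the decomposition: two crossing $C$-paths would need to keep four pairwise distinct cycle vertices simultaneously live, which is impossible with bags of size~$3$ (and, alternatively, would produce a $K_4$-minor, which pathwidth~$2$ excludes). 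Finally, if $b\in V(B)$ is adjacent to some $v\notin V(B)$, then $b$ must appear with $v$ in a bag of the decomposition of $G$; a non-cycle vertex of $B$ is stuck in a bag whose other two slots are occupied by cycle vertices, leaving no room for an external neighbour, so $b\in V(C)$.

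The main obstacle is the labeling step: making the top/bottom propagation \emph{well-defined} and ensuring the two resulting sequences give \emph{disjoint} paths rather than something that doubles back or collapses. I would handle this by a careful induction tracking both the current separator and the set of forgotten vertices, using $2$-connectedness (any collision would identify a smaller separator of $B$, contradicting its structure) together with the minimality of $n$ to rule out the bad configurations. The small base cases $k=1$ or $\ell=1$, where $C$ is a triangle with one side degenerated to a single vertex, would need to be handled separately.
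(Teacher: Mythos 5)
First, a point of reference: the paper does not prove this theorem at all --- it is imported verbatim from Bir\'o, Keller, and Young \cite{BKY} (with a related earlier result in \cite{Betal}), so there is no proof in the paper to compare yours against; what you have written is an attempt to reprove the cited structure theorem from scratch. Your overall strategy --- take a length-minimal width-$2$ path decomposition of $B$, use $2$-connectedness to force every consecutive intersection $S_i=X_i\cap X_{i+1}$ to have size exactly $2$, and read the two sides of the cycle off a top/bottom labelling of these separators --- is the natural one and can be pushed through. Incidentally, the well-definedness of the labelling, which you single out as the main obstacle, is the easy part: $S_i$ and $S_{i+1}$ are both $2$-subsets of the $3$-set $X_{i+1}$, so at most one label can change per step.

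The genuine gaps are elsewhere. (1) The principle ``any non-edge inside a bag would let us shorten the decomposition'' is false: $C_4$ has the minimum-length decomposition $\{a,b,d\},\{b,c,d\}$ with the non-edge $bd$ lying in both bags. The adjacency you actually need (between the outgoing top of $S_i$ and the incoming top of $S_{i+1}$) does follow from minimality, but by a different argument: in that configuration $X_{i+1}=S_i\cup S_{i+1}$, and if the new pair were a non-edge the entire bag $X_{i+1}$ could be deleted. (2) The ``cosmetic adjustment at the two ends'' is doing essential, unsupplied work: in $K_{2,3}$ every separator equals the non-adjacent pair $\{x,y\}$, so your top and bottom sequences are the single vertices $x$ and $y$ and no closing edges ``sit in $X_1$ and $X_n$''; the whole $4$-cycle must be manufactured from the two end-bag vertices, and one must decide which sequence each of them joins and prove the result closes up. (3) The longest-cycle claim is not established: a longer cycle would not need four vertices in one bag --- it would simply reroute through a degree-$2$ third-slot vertex $x$, replacing one arc of $C$ between $x$'s two neighbours by the two-edge path through $x$, and excluding this requires comparing the lengths of the two arcs of $C$ between those neighbours. (4) The last bullet concerns neighbours outside $B$, so it must be argued in a path decomposition of $G$, whose restriction to $V(B)$ need not be your chosen minimal decomposition of $B$; the ``no room in the bag'' argument does not transfer as stated. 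By contrast, your $K_4$-minor remark correctly disposes of genuinely crossing $C$-paths; note only that the strict inequality $j_1<j_2$ in the statement is contradicted by the paper's own Figure~\ref{fig:endcaps}, which shows two $C$-paths ending at $w_6$, so that condition is presumably intended weakly.
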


Let $B$ be a $2$-connected block of a graph in $\mathcal{P}_2$ with the structure described above identified and fixed. Let $i$ be the least integer for which there is a $C$-path $P$ from $v_i$ to $w_j$, if such exists. The cycle $v_iPw_jw_{j-1}\ldots w_1v_1\ldots v_{i}$ will be called the \emph{left end-cap} of $B$. Similarly, if $i$ is the greatest integer for which there is a $C$-path $P$ from $v_i$ to $w_j$, the cycle $v_iPw_jw_{j+1}\ldots w_\ell v_k\ldots v_{i}$ will be called the \emph{right end-cap} of $B$. (Figure~\ref{fig:endcaps}.) Note that unless $B$ is a single cycle, both end-caps exist and are distinct, however their vertex sets are not necessarily disjoint.

\begin{figure}
\input{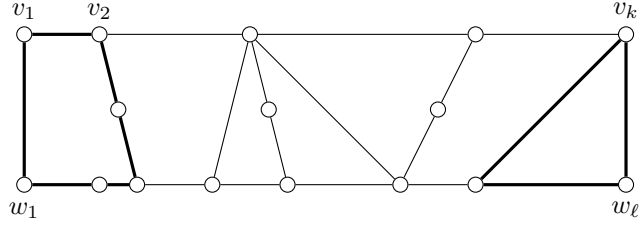}
\caption{A $2$-connected block of a graph of pathwidth at most $2$. The two end-caps are marked by thick lines.}
\label{fig:endcaps}
\end{figure}

We are ready to prove Theorem~\ref{thm:pw}.

\begin{proof}
Since $\indeq(C_4)=2$ and $C_4\in\mathcal{P}_2$,
it follows that $\indeq(\mathcal{P}_2)\leq 1/2$.

For the lower bound we proceed by induction on $|G|$ with the small cases being trivial. Now suppose $G\in\mathcal{P}_2$, and the theorem has been established for graphs on fewer vertices than $|G|$.

If $G$ has an isolated vertex $v$, then $v$ can be added to an indeque set of $G-v$, so $\indeq(G)\geq \indeq(G-v)+1$, and the hypothesis finishes the proof.

If $G$ has a leaf $v$, and $w\sim v$, then apply the hypothesis for $G-v-w$. If $S$ is an indeque set of $G-v-w$, then $S\cup\{v\}$ is an indeque set of $G$, and so $\indeq(G)\geq\indeq(G-v-w)+1$, finishing the proof.

Thus, we can assume that $\delta(G)\geq 2$. Let $B$ be a leaf or an isolated vertex in the block graph of $G$. We have shown that $B$ (as a subgraph of $G$) is not a bridge or an isolated vertex, so it is a $2$-connected subgraph of $G$. Also, $B\in\mathcal{P}_2$, so Theorem~\ref{thm:pwstruct} applies, but furthermore, $B$ has at most one vertex that is adjacent to a vertex outside of $B$ in $G$; call this vertex (if exists) $b$. According to Theorem~\ref{thm:pwstruct}, $b$ is on a longest cycle $C$ of $B$. In the rest of the proof, we will assume that $b$ exists, otherwise a trivial modification will make the proof simpler.

Suppose $B$ is a single cycle, and let $V(B)=\{b,v_1,\ldots,v_k\}$, in this order around the cycle. Let $S$ be an indeque set of $G-B$. Then $S\cup\{v_1,v_2,v_4,v_5,v_7,v_8\ldots\}$ is an indeque set of $G$, and has size at least $|S|+(k+1)/2$. So applying the hypothesis for $G-B$ finishes the proof. (Figure~\ref{fig:singlecycle}.)

\begin{figure}
\input{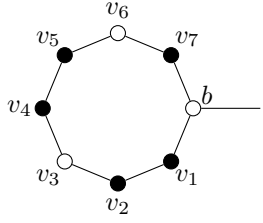}
\caption{$B$ is a single cycle. Additions to the indeque set are marked by filled dots.}
\label{fig:singlecycle}
\end{figure}

Now assume that $B$ is not a single cycle, so it has end-caps. Each end-cap consists of two paths between the vertices $v_i$ and $w_j$ we identified in Theorem~\ref{thm:pwstruct}. Refer to the path that runs along $C$ as $Q$, and the other one as $P$, as we did in the theorem. So $P$ is a $v_i$--$w_j$ path, and $Q$ is a $w_j$--$v_i$ path.

Note that we may assume that $b$ is not an internal vertex of $Q$. This is because $B$ has two distinct end-caps, and the internal vertices of the corresponding $w_j$--$v_i$ paths are disjoint, so $b$ can be in at most one of them. Note, however that we can not rule out that $b=v_i$ or $b=w_j$. 

Suppose that the internal vertices of $Q$ are (in order) $u_1,\ldots,u_m$, and $m\geq 2$. If $m=2$, then let $u_3=v_i$. Let $S$ be an indeque set of $G-\{w_j,u_1,u_2,u_3\}$. Then $S\cup\{u_1,u_2\}$ is an indeque set of $G$, so the proof is complete.

The remaining case is that $Q$ has at most one internal vertex. We know from Theorem~\ref{thm:pwstruct} that $P$ has at most one internal vertex, which is not $b$. At least one of $P$ and $Q$ has an internal vertex, and if both have one (say, $u_1$, $u_2$), then applying the hypothesis for $G-P-Q$ and adding $u_1$, $u_2$ to the resulting indeque set finishes the proof.

So it remains to resolve the case when $P$ and $Q$ have exactly one internal vertex together. This vertex $u$ is on $Q$, otherwise $C$ is not a longest cycle. Recall the structure of $B$: the path along the cycle $C$ from $w_j$ to $v_i$ is $Q$, but there is another path $Q'$ along $C$ from $v_i$ to $w_j$, which is internally disjoint from $Q$, and it has at least one interval vertex. This shows that $\deg(v_i)\geq 3$ and $\deg(w_j)\geq 3$. We claim that $\deg_B(v_i)=3$ or $\deg_B(w_j)=3$. Otherwise, there are indices $i',j'$ such that there is a $v_i$--$w_{j'}$ $C$-path and a $w_j$--$v_{i'}$ $C$-path in $B$. These paths contradict the third condition of Theorem~\ref{thm:pwstruct}. (Figure~\ref{fig:deg3}.)

\begin{figure}
\input{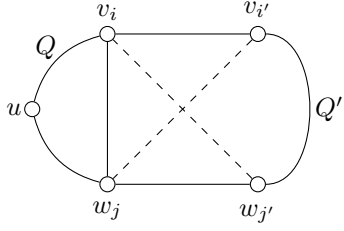}
\caption{The two dashed paths can't exist together}
\label{fig:deg3}
\end{figure}

Without loss of generality, let $\deg_B(v_i)=3$. Let the first internal vertex on the path $Q'$ be $z$. Apply the hypothesis for $G-\{v_i,w_j,u,z\}$; if $S$ is an indeque set of this graph, then $S\cup\{v_i,u\}$ is an indeque set of $G$, unless $b=v_i$. So this finishes the proof, unless $b$ is the degree-$3$ (in $B$) endpoint of $P$. (Figure~\ref{fig:Qsingle}.)

\begin{figure}
\input{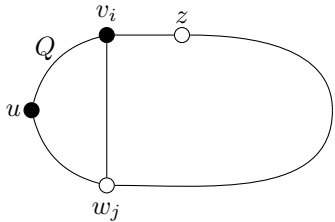}
\caption{Single internal vertex on $Q$}
\label{fig:Qsingle}
\end{figure}

In this situation, we can consider the other end-cap of $B$. Since $b$ can not be an internal vertex of that end-cap, the same argument can be used. We can finish the proof, unless we end up in the same subcase. A moment of consideration shows that in this case $|B|=4$, and $B$ is as depicted in Figure~\ref{fig:B4}.

\begin{figure}
\input{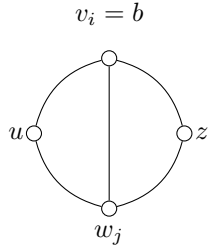}
\caption{$b$ is on both end-caps}
\label{fig:B4}
\end{figure}

This time, apply the hypothesis for $G-B$, and add $\{u,w_j\}$ to the indeque set. Note that this is the same as swapping the role of $v_i$ and $w_j$ in the previous argument.
\end{proof}

\section{The class $\planar$}

In this section, we discuss the indeque ratio of the class of planar graphs.

\subsection{Smallest known indeque number of a planar graph}

The following example (Figure~\ref{fig:oct}) is due to a person of unknown identity. The reason for this is that one of the authors of \cite{salia} was giving a talk on that paper, after which a member of the audience showed him the example. The audience member stated that he found the graph using a computer search.

\begin{figure}
\input{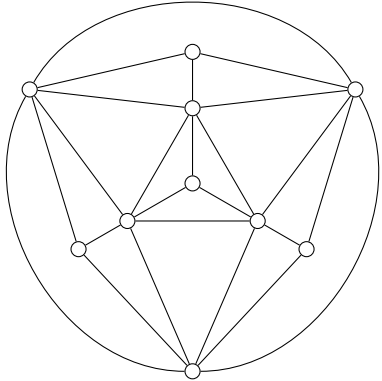}
\caption{Alternatingly apexiated octahedron}\label{fig:oct}
\end{figure}

Note that the graph in Figure~\ref{fig:oct} can be constructed from the octahedron by adding an apex vertex to every other face (an apex vertex is adjacent to every vertex of the boundary). Here ``every other face'' means that a face receives an apex vertex if and only if its adjacent faces do not. This is possible, because the dual of the octahedron is a bipartite graph.

The graph has several $4$-element indeque sets; in fact for every integer partition of $4$ it has a corresponding indeque set consisting of cliques of sizes of the parts. However, a simple case analysis (or computer search) shows that it has no $5$-element indeque set. So we proved the following proposition.

\begin{proposition}\label{prop:octa}
\[
\indeq(\planar)\leq 2/5
\]
\end{proposition}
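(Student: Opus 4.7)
The plan is to verify $\indeq(G) \le 4$ for the $10$-vertex planar graph $G$ of Figure~\ref{fig:oct}; then taking disjoint copies yields $\indeq(\planar, 10k) \le 4k$ for every $k$, hence $\indeq(\planar) \le 2/5$. The matching lower bound $\indeq(G) \ge 4$ is witnessed by the $K_4$ on $\{c, a_1, a_2, a_3\}$; in fact every partition of $4$ is realised (e.g., the triangle $\{b_1,b_2,b_3\}$ with the isolated $c$ gives $3+1$, and the four apex vertices form an independent set of size $4$ giving $1+1+1+1$).

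To prove $\indeq(G) \le 4$, I would suppose for contradiction that $S \subseteq V(G)$ is an indeque set with $|S|=5$. Then $G[S]$ is a disjoint union of cliques of size at most $\omega(G)=4$ (planarity forbids $K_5$). I split by $t := |S \cap \{c, x_1, x_2, x_3\}|$, the number of apex vertices in $S$. The key structural facts are: the four apex vertices are pairwise non-adjacent; each apex $y$ has octahedral neighbourhood a triangular face $F_y$ of the octahedron; any two of the four apexed faces share exactly one octahedron vertex; and $\bigcup_y F_y = V_{\mathrm{oct}}$. Moreover, the labelling $v \mapsto \{y : y \sim v\}$ is a bijection $V_{\mathrm{oct}} \to \binom{\{c,x_1,x_2,x_3\}}{2}$ that intertwines every permutation of the four apexes with an automorphism of $G$, so $\mathrm{Aut}(G)$ acts as $S_4$ on the apex vertices and it suffices to treat a single orbit representative for each value of $t$.

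For $t \in \{0,1\}$, $|S \cap V_{\mathrm{oct}}| \ge 4$ in $V_{\mathrm{oct}} \cong K_{2,2,2}$; since $\alpha(K_{2,2,2}) = 2$, a direct inspection of the possible distribution-types $(2,2,0),(2,1,1),(2,2,1)$ across the three classes always produces an induced $P_3$ in $S$, a contradiction. For $t \in \{3,4\}$ the covering property $\bigcup_y F_y = V_{\mathrm{oct}}$ forces every octahedron vertex of $S$ into some apex's clique, while the residuals $F_y \setminus \bigcup_{y' \ne y} F_{y'}$ each contain at most one element, so the rigid choices quickly produce an edge between supposedly separate cliques. The main obstacle is $t=2$: after reducing to the representative pair $\{c, x_1\}$, I enumerate the ways $(p,q,r)$ with $p+q+r=3$ in which the three remaining octahedron vertices of $S$ split between the two apex-cliques and a possible third leftover component, and show each split forces a forbidden adjacency---typically an $a_i$--$b_j$ edge of the octahedron linking two supposedly distinct cliques.
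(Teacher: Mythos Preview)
Your proposal is correct and follows exactly the paper's approach: exhibit the $10$-vertex ``alternatingly apexiated octahedron'' and verify that it has no indeque set of size $5$. The paper itself merely asserts that ``a simple case analysis (or computer search)'' establishes $\indeq(G)\le 4$, whereas you actually supply such a case analysis, organised by the number $t$ of apex vertices in $S$ and exploiting the $S_4$-action on the apexes; the sketched subcases (particularly $t=2$ and $t\in\{3,4\}$) all fill in routinely along the lines you indicate.
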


For the other direction, we will use a classical theorem by Borodin. Recall that an acyclic coloring of a graph is a proper coloring such that every two color classes induce a forest. Borodin \cite{borodin} proved that every planar graph admits an acyclic $5$-coloring. We will use this result, and our earlier theorem to slightly improve on the upper bound of the indeque ratio of planar graphs.

\begin{proposition}\label{prop:planarlower}
\[
\indeq(\planar)\geq \frac{4}{15} 
\]
\end{proposition}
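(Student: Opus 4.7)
The plan is to combine Borodin's acyclic $5$-coloring theorem with the forest bound from Theorem~\ref{thm:forest} via a pigeonhole argument on pairs of color classes.

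First I would fix a planar graph $G$ on $n$ vertices and apply Borodin's theorem to obtain an acyclic proper $5$-coloring with color classes $V_1,V_2,V_3,V_4,V_5$. The defining property of an acyclic coloring is that for every pair of indices $i\neq j$, the induced subgraph $G[V_i\cup V_j]$ is a forest. So I have $\binom{5}{2}=10$ induced forests, and it suffices to locate a large one.

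Next, I would observe that summing over all unordered pairs,
\[
\sum_{1\leq i<j\leq 5}\bigl(|V_i|+|V_j|\bigr)=4\sum_{i=1}^{5}|V_i|=4n,
\]
since each class appears in exactly $4$ of the $10$ pairs. By averaging, there exist indices $i<j$ with $|V_i\cup V_j|=|V_i|+|V_j|\geq 4n/10=2n/5$.

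Now I would invoke the proof of Theorem~\ref{thm:forest}, which actually establishes the finite (non-asymptotic) statement $\indeq(F)\geq 2|F|/3$ for every forest $F$. Applied to $F=G[V_i\cup V_j]$, this produces a set $S\subseteq V_i\cup V_j$ with $G[S]$ a disjoint union of cliques (in fact an independent set together with a matching, since $F$ is bipartite) and with
\[
|S|\geq \frac{2}{3}|V_i\cup V_j|\geq \frac{2}{3}\cdot\frac{2n}{5}=\frac{4n}{15}.
\]
Because the indeque condition only depends on $G[S]$, the set $S$ is also an indeque set of $G$ itself, giving $\indeq(G)\geq 4n/15$. Dividing by $n$ and letting $n\to\infty$ yields $\indeq(\planar)\geq 4/15$.

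There is no real obstacle here; the only subtlety to confirm is that the induction in the proof of Theorem~\ref{thm:forest} really delivers the pointwise bound $\indeq(F)\geq 2|F|/3$ for every forest, not merely the limit, so that we may apply it to the particular forest $G[V_i\cup V_j]$ of size $\geq 2n/5$. Inspection of that proof shows this is the case.
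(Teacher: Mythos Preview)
Your proof is correct and essentially identical to the paper's: the paper simply takes the two largest color classes (which of course realize the maximal pair sum your averaging argument guarantees) to get an induced forest of size at least $2n/5$, and then applies Theorem~\ref{thm:forest} exactly as you do. Your explicit remark that Theorem~\ref{thm:forest} delivers the pointwise bound $\indeq(F)\geq 2|F|/3$, not just the limit, is a useful clarification that the paper leaves implicit.
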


\begin{proof}
Let $G$ be a planar graph on $n$ vertices. Consider an acyclic $5$-coloring of $G$. Let $F$ be the union of the two largest color classes. Then $F$ is an induced forest of size at least $\frac25 n$.

By Theorem~\ref{thm:forest}, $F$ has an indeque set $S$ of size $\frac23 |F|\geq \frac4{15}n$. Since $S$ is also an indeque set of $G$, we have shown $\indeq(G)\geq\frac4{15}n$.
\end{proof}

We note that Albertson and Berman \cite{ab} conjectured that every planar graph has an induced forest containing at least half of the vertices. If true, this conjecture would imply a lower bound of $1/3$ for $\indeq(\mathcal{P})$. We will make a stronger conjecture in Section~\ref{sec:further}.

\subsection{Bounded girth}

Let $\mathcal{G}_k$ be the class of planar graphs of girth at least $k$. In this subsection, we, again, apply Theorem~\ref{thm:forest} to give lower bounds for the indeque ratio for classes $\mathcal{G}_4$ and $\mathcal{G}_5$.

Dross et al.~\cite{dross} proved that triangle free planar graphs (class $\mathcal{G}_4$) of order $n$ contain an induced forest of size $\frac{6n+7}{11}$. Using this, and our Theorem~\ref{thm:forest}, we conclude the following.

\begin{proposition}
\[
\frac{4}{11}\leq\indeq(\mathcal{G}_4)\leq \frac12
\]
\end{proposition}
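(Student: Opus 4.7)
The plan is to establish the two inequalities separately; both are short consequences of results already stated in the paper.

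For the upper bound $\indeq(\mathcal{G}_4)\le 1/2$, I would exhibit an infinite family of triangle-free planar graphs whose indeque ratio is exactly $1/2$. The obvious candidate is disjoint unions of $C_4$. A direct inspection gives $\indeq(C_4)=2$: the only $4$-vertex subset induces $C_4$ itself, any $3$-vertex subset induces $P_3$, and neither is a disjoint union of cliques, while $2$-vertex subsets always work. Taking $n$ disjoint copies of $C_4$ yields a triangle-free planar graph on $4n$ vertices with indeque number $2n$, so $\indeq(\mathcal{G}_4,4n)/(4n)\le 1/2$ for every $n$.

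For the lower bound, the strategy is to feed the Dross et al.\ result into Theorem~\ref{thm:forest}. First, I would observe that the induction in the proof of Theorem~\ref{thm:forest} actually establishes the pointwise bound $\indeq(F)\ge 2|F|/3$ for every forest $F$: both inductive cases delete three vertices from the largest component and add two to the indeque set, and the base case is trivial; summing componentwise extends this from a single tree to any forest. Then, given $G\in\mathcal{G}_4$ on $n$ vertices, let $F\subseteq V(G)$ be an induced forest with $|F|\ge (6n+7)/11$ supplied by Dross et al. Since any indeque set of $G[F]$ remains an indeque set of $G$,
\[
\indeq(G)\ \ge\ \indeq(G[F])\ \ge\ \frac{2|F|}{3}\ \ge\ \frac{2}{3}\cdot\frac{6n+7}{11}\ =\ \frac{12n+14}{33}.
\]
Dividing by $n$ and letting $n\to\infty$ gives $\indeq(\mathcal{G}_4)\ge 12/33$.

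There is essentially no obstacle here; the proposition is a two-line combination of two external inputs. The only item worth checking explicitly is that Theorem~\ref{thm:forest} really yields the per-graph inequality $\indeq(F)\ge 2|F|/3$ rather than merely an asymptotic statement, which a quick re-reading of its proof confirms.
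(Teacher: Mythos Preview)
Your proposal is correct and follows essentially the same approach as the paper: the upper bound is witnessed by $C_4$ (the paper uses a single copy, relying on $\indeq(\mathcal{G}_4)=\inf_n \indeq(\mathcal{G}_4,n)/n$, while you spell out the disjoint-union version), and the lower bound combines the Dross et al.\ induced-forest bound with Theorem~\ref{thm:forest}, exactly as in the proof of Proposition~\ref{prop:planarlower}. Your write-up is more detailed than the paper's one-line sketch, but the content is the same.
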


\begin{proof}
Let $G$ be a planar graph on $n$ vertices of girth at least $4$. Then $G$ has an induced forest $F$ of size $\frac{6n+7}{11}$. By Theorem~\ref{thm:forest}, $F$ has an indeque set $S$ of size $\frac23|F|=\frac{12n+14}{33}$. Since $S$ is also an indeque set of $G$, we have shown $\indeq(G)\geq \frac{12n+14}{33}$, and hence $\indeq(\mathcal{G}_4,n)\geq \frac{12n+14}{33}$. Letting $n\to\infty$, we conclude $\indeq(\mathcal{G}_4)\geq\frac{12}{33}=\frac{4}{11}.$

The upper bound is demonstrated by the graph $C_4$.
\end{proof}

We do think the correct answer is closer to the upper bound. Perhaps it is equal to it.

For graphs of girth at least $5$, the existence of even larger sizes of induced forests were shown. Kelly and Liu \cite{kellyliu}, and independently, Shi and Xu \cite{shixu} proved that graphs of $\mathcal{G}_5$ admit an induced forest containing at least $2/3$ of the vertices. This gives the following bound.

\begin{proposition}\label{prop:G5}
\[
\indeq(\mathcal{G}_5)\geq\frac49
\]
\end{proposition}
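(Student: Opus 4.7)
The proof will follow exactly the template of Proposition~\ref{prop:planarlower} and the bounded-girth case for $\mathcal{G}_4$. The plan is to combine the induced-forest result cited just before the statement with the tight bound from Theorem~\ref{thm:forest}, treating the former as a black box.

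More concretely, let $G\in\mathcal{G}_5$ be a planar graph of girth at least $5$ on $n$ vertices. By the Kelly--Liu / Shi--Xu theorem, $G$ admits an induced subgraph $F\subseteq G$ which is a forest and satisfies $|V(F)|\geq \tfrac{2}{3}n$. Since $F$ is a forest, Theorem~\ref{thm:forest} applies: there is an indeque set $S\subseteq V(F)$ of $F$ with $|S|\geq \tfrac{2}{3}|V(F)|$. Because the indeque property is determined by the induced subgraph $G[S]$, and $G[S]=F[S]$ (as $F$ is induced in $G$), the set $S$ is also an indeque set of $G$. Stringing the two inequalities together gives $\indeq(G)\geq \tfrac{2}{3}\cdot \tfrac{2}{3}n = \tfrac{4}{9}n$, and passing to the limit yields $\indeq(\mathcal{G}_5)\geq 4/9$.

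There is essentially no obstacle here: both the induced forest theorem and Theorem~\ref{thm:forest} are already in hand, and the only thing to verify is the trivial observation that taking an indeque set inside an \emph{induced} subforest produces a valid indeque set of the ambient graph. The only minor subtlety is that Theorem~\ref{thm:forest} is stated as a ratio (in the limit), but its proof in fact establishes $\indeq(F)\geq 2|F|/3$ for every forest $F$, which is what we invoke above; this is the same implicit step used in Proposition~\ref{prop:planarlower} and in the $\mathcal{G}_4$ bound, so no new work is required.
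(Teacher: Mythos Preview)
Your proof is correct and matches the paper's approach exactly: the paper simply states that the Kelly--Liu / Shi--Xu induced-forest bound ``gives the following bound,'' meaning precisely the composition with Theorem~\ref{thm:forest} that you wrote out. Your remark that Theorem~\ref{thm:forest} actually yields the pointwise inequality $\indeq(F)\geq 2|F|/3$ (not just the limiting ratio) is accurate and is indeed the same implicit step used in Proposition~\ref{prop:planarlower}.
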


Note that a promising result of Borodin and Glebov \cite{if} shows that every planar graph $G$ of girth at least $5$ has a decomposition of $V(G)$ into an independent set $I$ and an induced forest $F$. Since $I$ is an indeque set of $G$, and $F$ contains an indeque set of size $\frac23|F|$, we have that $m=\max\{|I|,\frac23|F|\}$ is a lower bound for $\indeq(G)$.

The number $m$ is least when $|I|=\frac23|F|$. A quick calculation shows that this happens when $m=\frac25|G|$, so the lower bound given by this argument for the indeque ratio of $\mathcal{G}_5$ is $2/5$. Unfortunately it is weaker than the one provided by Proposition~\ref{prop:G5}.

Kowalik et al.~\cite{kowalik} conjectured that elements of $\mathcal{G}_5$ of order $n$ have an induced forest of size $(7/10)n$. This would improve our lower bound to $7/15$.

\section{Further questions}\label{sec:further}

Of course, the big open problem is to determine the indeque ratio of the class of planar graphs. We propose the following conjecture.

\begin{conjecture}
\[\indeq(\mathcal{P})=2/5\]
\end{conjecture}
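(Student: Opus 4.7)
The upper bound $\indeq(\planar)\leq 2/5$ is already furnished by the alternatingly apexiated octahedron (Proposition~\ref{prop:octa}), so the content of the conjecture lies entirely in proving the matching lower bound $\indeq(G)\geq 2|G|/5$ for every planar graph $G$. My plan is a discharging argument in the spirit of proofs of the Four Color Theorem. Assume for contradiction that $G$ is a vertex-minimum planar counterexample. The first phase is to compile a list of \emph{reducible configurations}: local substructures such that one can delete a small vertex set $X\subseteq V(G)$, apply the induction hypothesis to $G-X$, and then augment the resulting indeque set by enough vertices of $X$ to contradict minimality. Concretely, every reduction must supply on average strictly more than $\frac{2}{5}|X|$ new indeque-set vertices.

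Several simple reductions are immediate, mirroring the arguments used in Theorem~\ref{thm:forest} and Theorem~\ref{thm:pw}. A leaf reduces via removing it together with its neighbor, which costs $2$ vertices and recovers $1$. A degree-$2$ vertex inside a triangle reduces by removing the triangle and putting back any part of it that is safe. A degree-$2$ vertex whose neighbors are non-adjacent requires more care: one must either find a second short path nearby, or exploit a nearby triangle, so that the local ``clique bonus'' brings the recovery above $\frac{2}{5}$ of what is deleted. More delicate reductions are needed around vertices of degree $3$, $4$, and $5$ and around short separating cycles. The crucial fact here is that planarity forces many triangles; the induced cliques of size $2$ and $3$ that arise will be what allows the indeque set to exceed the Four Color bound of $\frac{1}{4}$.

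The second phase is the discharging step: assign each vertex charge $\deg(v)-4$ and each face charge $2\deg(f)-4$, so that by Euler's formula the total charge is $-8$. One then designs discharging rules that redistribute charge such that every element with non-negative charge after discharging contains a reducible configuration from phase one, yielding a contradiction. The main obstacle, I expect, will be calibrating the list of reducible configurations so that they simultaneously cover every planar graph (via the discharging) and respect the tight budget of $\frac{2}{5}$. The Four Color Theorem has the looser budget of $\frac{1}{4}$ to work with, and there is substantial slack; here every reduction must average a gain of exactly $\frac{2}{5}$ per removed vertex, leaving almost no margin. The octahedral extremal example, built from triangles meeting at degree-$4$ vertices with an apex attached to every other triangular face, strongly constrains what the list of reducible configurations can look like: any proof must recognize this configuration as (essentially) the unique tight case, and the discharging rules must behave neutrally on it. A reasonable fallback is to first attempt the weaker bound $\indeq(\planar)\geq 1/3$, which already would follow from the Albertson--Berman conjecture via Theorem~\ref{thm:forest}, before targeting the full $\frac{2}{5}$.
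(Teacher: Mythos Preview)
The statement you are attempting to prove is a \emph{conjecture} in the paper, not a theorem; the authors explicitly label it as such and offer no proof. They support it only with the upper bound from Proposition~\ref{prop:octa} and with the heuristic evidence that the triangular grids $T_n$ appear to satisfy $\indeq(T_n)/|T_n|\to 2/5$ (itself stated as a separate conjecture). The best lower bound the paper actually establishes is $\indeq(\planar)\geq 4/15$ via Borodin's acyclic $5$-coloring and Theorem~\ref{thm:forest}. So there is no ``paper's own proof'' to compare against.

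Your proposal is not a proof either: it is a research plan. The discharging framework you sketch is a natural line of attack, but none of the hard work is done. You list a few easy reductions (leaves, degree-$2$ vertices), but the reductions that would actually carry a $2/5$ bound---those around degree-$3$, $4$, $5$ vertices and short separating cycles---are only asserted to exist, not exhibited, and you yourself note that the budget leaves ``almost no margin.'' A discharging proof stands or falls on a complete, verified list of unavoidable reducible configurations together with explicit discharging rules; absent those, nothing has been shown. In particular, your leaf reduction (delete two vertices, recover one) and the non-adjacent degree-$2$ reduction already fail the $2/5$ threshold on their own, so you would need compensating configurations elsewhere, and you have not identified them. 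As it stands, the proposal does not close the gap between $4/15$ and $2/5$, and the conjecture remains open.
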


In other words, we believe that Proposition~\ref{prop:octa} is sharp. To support this conjecture, we present another one.

Let $T_n=(V_n,E_n) $ be the graph that we will call triangular grid, where $T_n=\{(i,j):i+j\leq n\}$, and $(i,j)\sim(i',j')$ if $j=j'$ and $|i-i'|=1$; or $i=i'$ and $|j-j'|=1$; or $i-i'=j'-j=\pm 1$. So $T_0$ is a single vertex, $T_1$ is $K_3$, and $T_2$ is a graph on $6$ vertices and $9$ edges.

\begin{figure}
\input{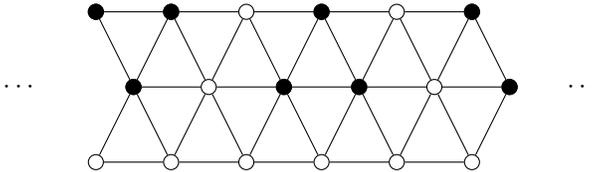}
\caption{Indeque set in $T_n$. The three rows of vertices are repeated.}\label{fig:triangles}
\end{figure}

\begin{proposition}
For all $n\in\mathbb{N}$,
\[
\indeq(T_n)\geq \frac25|T_n|,
\]
\end{proposition}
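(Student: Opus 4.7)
The plan is to give an explicit periodic indeque pattern $\Sigma$ in the infinite triangular lattice $T_\infty$ (obtained by extending the adjacency rules of $T_n$ to all $(i,j) \in \mathbb{Z}^2$) with density $2/5$, and then to apply a translation-averaging argument to transfer the bound to $T_n$.

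Specifically, define
\[
\Sigma = \{(i,j) : j \equiv 1 \pmod{3},\ i \bmod 5 \in \{0,2,3\}\} \cup \{(i,j) : j \equiv 2 \pmod{3},\ i \bmod 5 \in \{0,2,4\}\}.
\]
This is the pattern depicted in Figure~\ref{fig:triangles}, after an appropriate coordinate identification. The next step is to verify that $T_\infty[\Sigma]$ is a disjoint union of the triangles
\[
U_{k,m} = \{(5k+2,3m+1),\ (5k+3,3m+1),\ (5k+2,3m+2)\}
\]
and
\[
D_{k,m} = \{(5k,3m+1),\ (5k,3m+2),\ (5k-1,3m+2)\}
\]
for $(k,m) \in \mathbb{Z}^2$. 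That each $U_{k,m}$ and $D_{k,m}$ is a triangle of $T_\infty$ is immediate from the adjacency rules, and that these triangles partition $\Sigma$ is clear from their residues modulo $(5,3)$. The substantive point is to rule out inter-triangle edges: a finite case analysis on the six residue pairs $(i \bmod 5, j \bmod 3)$ appearing in $\Sigma$, listing the (at most six) neighbors of a representative vertex in each class and confirming that every filled neighbor lies in the same triangle.

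Once this is in place, the conclusion follows by averaging over the $15$ translates $\Sigma_{a,b} = \{(i,j) : (i-a, j-b) \in \Sigma\}$ for $(a,b) \in \{0,1,2,3,4\} \times \{0,1,2\}$. Each $\Sigma_{a,b}$ is indeque in $T_\infty$ by translation invariance, and therefore $\Sigma_{a,b} \cap T_n$ is indeque in $T_n$, since $T_n$ is an induced subgraph of $T_\infty$. For any fixed vertex $(i,j) \in T_n$, the map $(a,b) \mapsto ((i-a) \bmod 5, (j-b) \bmod 3)$ is a bijection onto $\mathbb{Z}/5 \times \mathbb{Z}/3$, so $(i,j)$ lies in exactly the $6$ translates whose residue pair belongs to $\Sigma$. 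Summing,
\[
\sum_{(a,b)} |\Sigma_{a,b} \cap T_n| = 6\,|T_n|,
\]
so some choice of $(a,b)$ realizes $|\Sigma_{a,b} \cap T_n| \geq \tfrac{2}{5}|T_n|$, as desired.

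The main obstacle is the case analysis underlying the claim that $\Sigma$ induces isolated triangles in $T_\infty$; the symmetry between $U$-triangles and $D$-triangles roughly halves the work, but each of the six residue classes still needs its six potential neighbors inspected. Once that is done, the averaging step is entirely mechanical.
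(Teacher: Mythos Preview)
Your argument is correct and uses essentially the same periodic pattern the paper sketches in Figure~\ref{fig:triangles}; the paper's proof is just the one-line ``easy exercise'' pointing to that figure. Your translation-averaging step over the $15$ cosets of $5\mathbb{Z}\times 3\mathbb{Z}$ is a clean addition that the paper omits: it turns the density-$2/5$ pattern in $T_\infty$ into the exact inequality $\indeq(T_n)\ge \tfrac25|T_n|$ for every $n$ without any ad~hoc treatment of the triangular boundary, whereas the paper implicitly leaves that bookkeeping to the reader.
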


\begin{proof}
It is a relatively easy exercise to construct an indeque set of the proper size (see e.g.\ Figure~\ref{fig:triangles}). The pattern in Figure~\ref{fig:triangles} can be repeated to cover the infinite triangular grid, and therefore any finite subgraph of it. As $T_n$ is a subgraph of the infinite triangular grid, the pattern provides a construction of an indeque set of $T_n$. The finiteness of the grid sometimes makes it possible to find slightly larger indeque sets.
\end{proof}

Note that there are other patterns that provide indeque sets of similar size. Interestingly, we found a pattern that does not even use $K_3$ cliques.

We believe this is best possible.

\begin{conjecture}
\[
\lim_{n\to\infty}\frac{\indeq(T_n)}{|T_n|}=\frac25
\]
\end{conjecture}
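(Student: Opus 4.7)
We outline a plan toward this conjecture. The previous proposition gives the lower bound $\indeq(T_n)\ge (2/5)|T_n|$, so the remaining task is the matching upper bound $\indeq(T_n)\le (2/5)|T_n|+O(n)$ as $n\to\infty$. Let $S\subseteq V(T_n)$ be an indeque set. Since $T_n$ is $K_4$-free, $T_n[S]$ decomposes into $s_1$ singletons, $s_2$ edges, and $s_3$ triangles, with $|S|=s_1+2s_2+3s_3$. For an interior vertex $v\notin S$, the induced subgraph $T_n[N(v)]$ is isomorphic to $C_6$, so $N(v)\cap S$ must be an indeque subset of $C_6$; as cliques in $C_6$ have size at most $2$, this set has at most $4$ vertices and partitions into at most $3$ distinct $T_n[S]$-cliques. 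Writing $c(v)$ for the number of $S$-cliques adjacent to $v$, we get $c(v)\le 3$. Each $S$-clique $K$ satisfies $|N(K)\setminus K|=6,8,9$ for $|K|=1,2,3$, so double-counting pairs $(v,K)$ with $v\in N(K)\setminus K$ gives
\[
\sum_{v\notin S}c(v)=6s_1+8s_2+9s_3+O(n);
\]
combined with $c(v)\le 3$ this yields only the bound $|S|\le |V(T_n)|/2+O(n)$.

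To close the gap from $1/2$ to $2/5$, I would set up a discharging argument in which each $S$-clique $K$ exports $|K|\cdot(3/2)$ units of charge to its external neighbors, so that the total charge exported is $(3/2)|S|$; the aim is to show that each non-$S$ vertex absorbs charge at most $1$, whence $(3/2)|S|\le|V(T_n)\setminus S|+O(n)$ and hence $|S|\le (2/5)|V(T_n)|+O(n)$. A natural first attempt distributes uniformly at rates $1/4$, $3/8$, $1/2$ per external neighbor for singleton/edge/triangle cliques respectively; these match the extremal configuration of Figure~\ref{fig:triangles} (where every non-$S$ vertex is adjacent to exactly two triangles and thus absorbs charge $1$), and they give the desired cap at vertices with $c(v)\le 2$ or those whose adjacent cliques include small ones. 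The hard vertices are those with $c(v)=3$ at which all three adjacent cliques are triangles, for example a $v$ whose $S$-neighbors form an independent triple $\{1,3,5\}$ of the cycle $N(v)\cong C_6$; there, uniform rates would overflow to charge $3/2>1$.

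The principal obstacle is precisely this: such adversarial local configurations are realizable in the triangular lattice, so the discharging rule must be adaptive. One refinement is to let the per-neighbor rate of an $S$-triangle depend on how many of its external neighbors have $c\ge 3$, so that heavily shared triangles contribute less; proving that the refined scheme maintains the cap while still exporting the full $(3/2)|S|$ will require a case analysis over the $C_6$-patterns of $N(v)\cap S$. A more robust alternative is to encode the bound as a linear program on a toroidal quotient of $T_n$ with a small fundamental domain, solve it by computer, and transfer the bound to $T_n$ by a standard averaging argument. A third, more combinatorial approach is to identify a small induced subgraph of $T_n$ playing the role of the apexiated octahedron of Figure~\ref{fig:oct} (which itself does not embed in the triangular lattice) that forces a $2/5$ density ceiling and tiles the lattice efficiently.
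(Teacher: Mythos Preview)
The statement you are attempting is a \emph{conjecture} in the paper, not a theorem: the authors prove only the lower bound $\indeq(T_n)\ge(2/5)|T_n|$ in the preceding proposition and explicitly leave the matching upper bound open. There is therefore no paper proof to compare your proposal against.

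Your write-up is honest about being a plan rather than a proof, and the arithmetic of the discharging scheme is set up correctly: exporting $(3/2)|K|$ per clique and capping absorption at $1$ per non-$S$ vertex would indeed yield $|S|\le(2/5)|T_n|+O(n)$. But you have also correctly located the genuine gap: the uniform rule overflows at a vertex whose six neighbours contain three pairwise-nonadjacent $S$-vertices each lying in an $S$-triangle, and such configurations do occur in the lattice. None of your three suggested remedies (adaptive rates, LP on a toroidal quotient, a tiling obstruction) is carried out, and each hides real work --- in particular, the adaptive-rate idea must simultaneously keep the total export at $(3/2)|S|$ while lowering per-neighbour rates, which is a nontrivial balancing act that may simply fail. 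As it stands, this is a reasonable research outline toward an open problem, not a proof; the conjecture remains open in the paper and your proposal does not close it.
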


\subsection{Questions on other subclasses}

Recall that graphs of treewidth at most $2$ are exactly the $K_4$-minor-free graphs, also known as series--parallel graphs. The techniques in Section~\ref{sec:pathwidth} can probably be generalized to this class, though the details may be formidable. Another option to find the indeque ratio is to use the recursive series--parallel construction, or perhaps the tree-decomposition directly. Our Theorem~\ref{thm:pw} implies that the indeque ratio is at most $1/2$. We are willing to risk a conjecture that it is the right answer.

\begin{conjecture}\label{conj:tw}
Let $\mathcal{S}$ be the class of series--parallel graphs. Then
\[
\indeq(\mathcal{S})=1/2.
\]
\end{conjecture}

Another related class is the class of outerplanar graphs. They can have large pathwidth, but their treewidth is at most $2$. Hence Conjecture~\ref{conj:tw} implies the following weaker conjecture.

\begin{conjecture}
Let $\mathcal{O}$ be the class of outerplanar graphs. Then
\[
\indeq(\mathcal{O})=1/2.
\]
\end{conjecture}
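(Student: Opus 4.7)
The upper bound $\indeq(\mathcal{O})\leq 1/2$ is witnessed by $C_4$, so the real content is the lower bound. I would proceed by strong induction on $|G|$, following the overall template of Theorem~\ref{thm:pw}. The preliminary reductions are standard: if $G$ has an isolated vertex $v$, apply the inductive hypothesis to $G-v$ and add $v$ back; if $G$ has a leaf $v$ with neighbor $u$, apply the inductive hypothesis to $G-u-v$ and add $v$. So one may assume $\delta(G)\geq 2$. Any leaf block $B$ in the block decomposition of $G$ must then be $2$-connected, and since outerplanarity is preserved under subgraphs, $B$ is a $2$-connected outerplanar graph. Such a graph consists of a Hamilton cycle together with a collection of non-crossing chords. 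Let $b$ denote the unique cut-vertex of $G$ inside $V(B)$ (or an arbitrary vertex of $B$ in the boundary case $G=B$).

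The heart of the argument is the following dichotomy for each leaf block $B$: either
\begin{itemize}
\item[(i)] $\indeq(B-b)\geq\lceil|B|/2\rceil$, or
\item[(ii)] there is an indeque set $T\subseteq V(B)\setminus N_B[b]$ with $|T|\geq(|B|-1)/2$.
\end{itemize}
Given (i), I would apply the inductive hypothesis to $G-V(B)$ to get an indeque set $S'$ with $|S'|\geq(n-|B|)/2$ and take the union with a witness $T$ for (i); since $b\notin T$ and (because $b$ is the only cut-vertex inside $V(B)$) vertices of $V(B)\setminus\{b\}$ have no neighbors outside $V(B)$, the union has no cross-edges, is indeque, and has size at least $n/2$. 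Given (ii), I would apply the inductive hypothesis to $G-(V(B)\setminus\{b\})$ to get $S'$ of size at least $(n-|B|+1)/2$ and form $S'\cup T$; the condition $T\cap N_B(b)=\emptyset$ means that no edges of $G$ run between $T$ and $S'$, so the union is indeque of size at least $n/2$. In the boundary case $G=B$ one instead forms $T\cup\{b\}$, an indeque set of $B$ of size $(|B|+1)/2$, which closes that case.

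The main obstacle is establishing the dichotomy. When $|B|$ is even the inductive bound $\indeq(B-b)\geq(|B|-1)/2$ automatically rounds up to $|B|/2$, so (i) holds for free. The hard regime is when $|B|$ is odd and $B-b$ is extremal for the $1/2$ bound on the outerplanar indeque ratio, i.e.\ $\indeq(B-b)=(|B|-1)/2$ exactly. The simplest such configuration is $B-b\cong C_4$, which forces $|B|=5$ and $B$ to be a $4$-cycle together with a vertex $b$ adjacent to two consecutive vertices of that cycle; more generally, tight examples include chains of $4$-cycles glued along consecutive shared edges. In these cases one verifies (ii) by showing that $B[V(B)\setminus N_B[b]]$ still inherits enough of the underlying $C_4$-like structure to admit an indeque set of size $(|B|-1)/2$. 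Carrying out this analysis in full generality is where I expect the technical work to concentrate: it parallels the end-cap analysis in the proof of Theorem~\ref{thm:pw} but is organized around the weak dual tree of $B$ rather than around a single longest cycle, and it requires careful treatment of how the cut-vertex $b$ can sit relative to the leaf faces of that tree.
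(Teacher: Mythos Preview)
The statement you are addressing is a \emph{conjecture} in the paper, not a theorem: the paper does not prove $\indeq(\mathcal{O})=1/2$. The only lower bound the paper offers is $\indeq(\mathcal{O})\geq 4/9$, obtained by combining Hosono's induced-forest theorem with Theorem~\ref{thm:forest}. So there is no paper proof to compare against; your proposal is an attack on an open problem.

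As a strategy, your framework is sound. The reductions to $\delta(G)\geq 2$ and to a leaf block $B$ are correct, the two inductive branches (i) and (ii) are set up so that the arithmetic closes, and your observation that case (i) is automatic when $|B|$ is even is valid. The genuine gap---which you yourself flag---is the dichotomy for odd $|B|$: you have not shown that when $\indeq(B-b)=(|B|-1)/2$ exactly, one can always find an indeque set of size $(|B|-1)/2$ inside $V(B)\setminus N_B[b]$. Your description of the tight examples (chains of $C_4$'s) is suggestive but not exhaustive, and the assertion that ``$B[V(B)\setminus N_B[b]]$ still inherits enough of the underlying $C_4$-like structure'' is precisely the unproven lemma. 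A couple of cautionary test cases: if $b$ has high degree in $B$ (e.g.\ $b$ is the hub of a fan), then $V(B)\setminus N_B[b]$ may be empty, so (ii) fails outright and you are forced into (i); conversely, if $B-b$ is a long chain of $C_4$'s and $b$ sits at one end, (i) can fail and you must extract (ii) from a graph with one fewer $C_4$. Both cases do work, but they show that neither branch of the dichotomy holds uniformly---you really need both, and a proof would have to explain structurally why at least one always succeeds. Until that lemma is established, the argument remains a plausible outline rather than a proof.
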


For this latter conjecture, we point out that Hosono \cite{hosono} proved that outerplanar graphs have an induced forest on at least $2/3$ on their vertices, which, with Theorem~\ref{thm:forest} imply $\indeq(\mathcal{O})\geq 4/9$. Hosono's theorem is best possible, but the extremal example does have an indeque set on half of the vertices, just not via an induced forest.

\ifpub
\bibliographystyle{abbrv}
\else
\bibliographystyle{plain}
\fi

\bibliography{indeq}

\end{document}